\newcommand{\eeq}{\end{equation}}
\newcommand{\ZZ}{\mathbb{Z}}
\numberwithin{equation}{section}
\newtheorem{thm}{Theorem}[section]
\newtheorem{cor}[thm]{Corollary}
\def\slfrac#1#2{\hbox{\kern.1em %
 \raise.5ex\hbox{\the\scriptfont0 #1}\kern-.11em %
 /\kern-.15em\lower.25ex\hbox{\the\scriptfont0 #2}}}
\newcommand{\sF}{{\mathcal F}}
\newcommand{\sG}{{\mathcal G}}
\newcommand{\sgn}{\text{\rm sgn\,}}
\title[The de la Vall\'ee Poussin kernal]{The $L^{\!1}$ norms of de la Vall\'ee Poussin  kernels}
\author{Harsh Mehta}
\address{Department of Mathematics\\
University of Michigan\\
530 Church St.\\
Ann Arbor, MI 48109--1043  (USA)}
\email{hmehta@umich.edu}
\keywords{de la Vall\'ee Poussin kernel}
\subjclass[2010]{Primary: 42A05}
\begin{document}

\begin{abstract}
Charles de la Vall\'ee Poussin defined two different kernels that bear his name.  This paper considers the ones
 are a linear combinations of two Fej\'er kernels, 
which are known as the delayed means. We  show that
the $L^1$ norms are constant in families of delayed means, and 
determine the exact value 
of the $L^{\!1}$ norm for some of them.

\end{abstract}

\date{November 2, 2013}
\maketitle

\section{Introduction}\label{S:Intro}

\noindent
This paper studies properties of certain summability kernels for Fourier series,
the de la Vall\'{e}e Poussin kernels defined below.
 Let $f(x)$ be a periodic function on $\mathbb{R}$ of period $1$, 
of finite $L^1$-norm in $[0,1]$, with Fourier series denoted
$$ 
f(x) \sim \sum_{k \in \ZZ} \hat{f}(k) e(kx),
$$
where $e(x) := e^{2 \pi i x}.$
 Define the partial sum
  of its Fourier series
\begin{equation}\label{E:sum}
S_n(f, x) :=\sum^{n}_{k=-n}\widehat f(k)e(kx)\\
\end{equation}
The Fej\'{e}r mean (of period $1$) with parameter $n$ is 
\begin{equation}\label{E:fejer}
\sigma_n(f, x) :=\frac{1}{n+1}\sum_{k=0}^{n}S_k(f, x)=\sum_{k=-n}^n\left(1-\frac{|k|}{n+1}\right)\widehat f(k)e(kx).
\end{equation}
This mean is given as singular integral of convolution type 
\[
\sigma_n(f, x) = \int_{0}^1 f(x-u) K_{n}(u) du,
\]
where the {\em Fej\'{e}r kernel} $K_n(x)$, rescaled for functions of period $1$,   is
\begin{equation}\label{Fejer}
K_n(x) := \Delta_{n+1}(x) = \frac{1}{n+1} \Big(\frac{\sin \pi (n+1)x}{\sin \pi x}\Big)^{\, 2}.
\end{equation}
In 1918 de la  Vall\'ee Poussin \cite{deVP18} introduced the {\em delayed means} 
(also called  {\em de la Vall\'{e}e Poussin sums} (\cite{Ef59},  \cite{Ser11}), as
\begin{equation}\label{E:originaldef}
\sigma_{n,p}(f, x) :=\frac{1}{p}\sum_{k=n}^{n+p-1} S_n(f, x)=\frac{n+p}{p}\sigma_{n+p-1}(f, x)-\frac{n}{p}\sigma_{n-1}(f, x),
\end{equation}
Here we follow the  notation for these means used in Zygmund \cite[Chap. III.1, (1.30)]{Zyg68}, after rescaling the periodicity interval from
$[0, 2\pi]$ to $[0,1].$
The means above are given by  the convolution integral  
\begin{equation}
\sigma_{n, p} (f, x) = \int_{0}^1 f(x-u) K_{n, p} (u) du,
\end{equation}
in which  $K_{n, p}(x)$ is the {\em de la Vall\'{e}e Poussin kernel function} \cite{deVP18} with parameters $(n,p)$.
This is found to be 
\begin{eqnarray}\label{E:deVPdef}
K_{n, p}(x) & =  & \frac{n+p}{p} \Delta_{n+p}(t) - \frac{n}{p} \Delta_n(t) \\
&=& 
\frac{1}{p}\Big(
\frac{(\sin\pi (n+p)x)^2 - (\sin \pi n x)^2}{(\sin \pi x)^2} 
\label{E:deVPdef2}
\Big).
\end{eqnarray}
Taking $n=0$ and $p=n$ we obtain $K_{0, n}(x) = \Delta_{n}(x) = K_{n-1}(x)$, the Fej\'{e}r kernel
with a shifted parameter.
All of these kernels  have
\begin{equation}\label{norm}
\int_{0}^1 K_{n, p}(x) dx = 1.
\end{equation}


For each parameter set $(n,p)$ there is a family 
 $\sF_{n, p} := \{ K_{nN, pN}(x) : N \ge 1\}$ of kernel functions
indexed by  the positive integer parameter $N \ge 1$.
This family  forms a {\em summation kernel}
in the sense of Walker \cite[(8.1), (8.23)]{Wal88}, or a {\em finite $\theta$-factor} 
in the sense of Butzer and Nessel \cite[Sec. 1.2.5]{BN71}. 
These authors  define  a general {\em finite $\theta$-factor} to be an infinite family of data $\{ \theta_N(j): N\ge 1, j \in \mathbb{Z}\}$ with 
\[
S_{\theta}(f, x) = \sum_{j= - m(N)}^{m(N)} \theta_N(j) \hat{f}(j) e( jx)
\]
and with a  function $m(N) \to \infty$ as $N \to \infty$.
For  the de la Vall\'{e}e Poussin kernel with parameters $(n, p)$ 
the associated $\theta$-factor  takes  $m(N) = N$
and sets 
\[ 
\theta_N(j) := v_{n, p}( \frac{j}{N}),
\]
where $v_{n, p}(x)$ being a compactly supported piecewise linear function on $\mathbb{R}$
given by
\[
v_{n, p}(u) :=  \left\{
\begin{array}{cl}
1  & \mbox{if} ~~|u| \le n, \\
\frac{ n + p- |u|}{p}   & \mbox{if}~~ n \le |u| \le n+p,\\
0 &  \mbox{if} ~~~|u| \ge n+p
\end{array}
\right.
\]
Special cases of  function are pictured in \figurename~\ref{v(1,1)} 
and \figurename~\ref{v(2,2)}.

\begin{figure}[h]
\centering
\includegraphics[width=90mm]{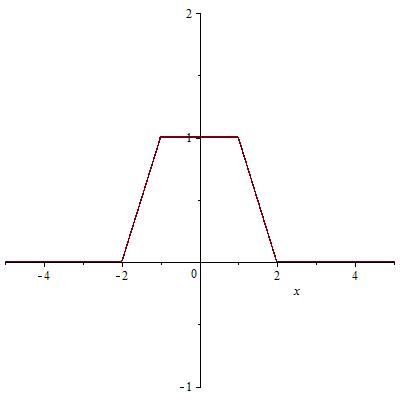}
\caption{ $ v_{n,p}(x)$ with $(n, p)= (1,1)$, so $N=1$ and $(r, s) = (1, 2)$.}
\label{v(1,1)}
\end{figure}

\begin{figure}[h]
\centering
\includegraphics[width=90mm]{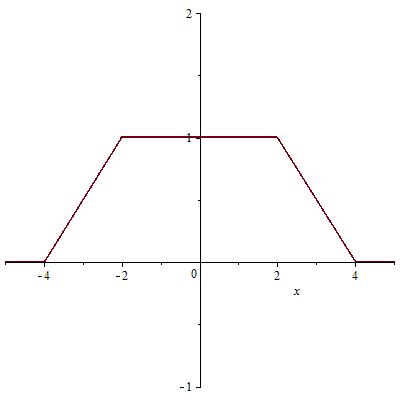}
\caption{ $v_{n,p}(x)$ with $(n,p)=(2,2)$, so $N=2$ and  $(r, s) = ( 1, 2)$.}
\label{v(2,2)}
\end{figure}

For later convenience we make a linear change of variables in the parameters,
setting $N = \gcd(n, n+p)$ and writing
 $Nr=n$ and $Ns=n+p$, so that $0 \leq r < s$and  $r$ and $s$ are relatively prime.
(Thus  $n=Nr$ and $p= N(s-r)$.)
In the new parameters  the
de la Vall\'{e}e Poussin kernels \eqref{E:deVPdef} become
\begin{equation}\label{E:genVdef0}
V_{rN,sN}(x) := K_{rN, (s-r)N}(x)  = \frac{s\Delta_{sN}(x) - r\Delta_{rN}(s)}{s-r}.
\end{equation}
In consequence
\begin{align}
V_{rN,sN}(x)&=\frac{(\sin sN\pi x)^2 - (\sin rN\pi x)^2}{(s-r)N(\sin \pi x)^2} \label{E:genVdef2} \\
&= \sum_{n=-sN+1}^{sN-1} v_{r,s-r}(n/N)e(nx)\label{E:genVdef3}
\end{align}
where for real $u$
\begin{equation}\label{E:vdef}
 v_{r,s-r}(u) = \begin{cases}
         1 & (|u|\le r), \\
         \displaystyle\frac{s-|u|}{s-r} & (r\le |u|\le s), \\
         0 & (|u|>s).
       \end{cases}
\end{equation}
A particularly well known case of his summability kernel \cite[Sect. 29]{deVP19}
occurs for $r=1, s=2$, and is 
\begin{align}\label{E:Vdef1}
K_{N,N}(x) = V_{N, 2N}(x) &= 2\Delta_{2N}(x) - \Delta_N(x) \\
 &= \frac{(\sin 2N\pi x)^2 - (\sin N\pi x)^2}{N(\sin\pi x)^2} \label{E:Vdef2} \\
       &=\sum_{|n|\le N} e(nx) \ + \ \sum_{N<|n|<2N} \big(2-|n|/N\big)e(nx)\,.
       \label{E:Vdef3}
\end{align}
The  shown for $N=1$ in \figurename~\ref{V_{1,2}}
and for $N=2$ in \figurename~\ref{V_{2,4}}.

\begin{figure}[h]
\centering
\includegraphics[width=90mm]{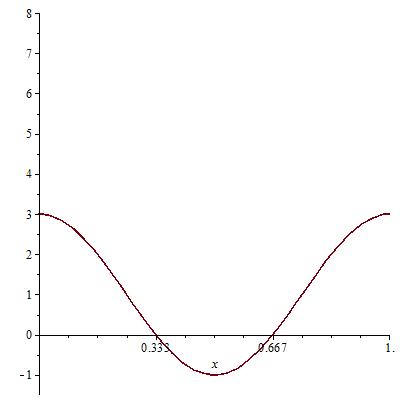}
\caption{de la Vall\'{e}e Poussin kernel $K_{1,1}(x) = V_{1,2}(x)$ on $[0,1]$.}
\label{V_{1,2}}
\end{figure}

\begin{figure}[h]
\centering
\includegraphics[width=90mm]{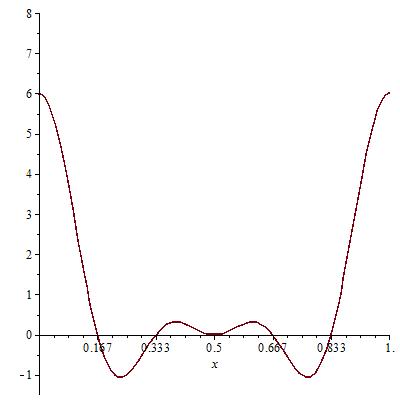}
\caption{de la Vall\'{e}e Poussin kernel $K_{2,2}(x) = V_{2,4}(x)$ on $[0,1]$.}
\label{V_{2,4}}
\end{figure}

This paper studies the $L^1$-norm of 
$V_{rN, sN}(x)$. From \eqref{norm} and \eqref{E:genVdef0} we have the easy bounds
$$
1 \le ||V_{rN, sN}(x)||_{L^{1}(\mathbb T)} = \int_{0}^1 |V_{rN, sN}(x)|  dx \le \frac{s+r}{s-r}.
$$
 Our main result  is the observation that the $L^1$-norms of the kernels in these families 
 are  independent of  the kernel family parameter $N$.

\begin{thm}\label{thm1}
Let $r$ and $s$ be fixed integers
with $0\leq r < s$ and $(r, s)=1$.  Then 
all members of  the kernel family $\sG_{r, s} = \{V_{rN,sN}: N  \ge 1\}$
have the same $L^1$-norm. That is, 
$$
\|V_{rN, sN}\|_{L^{\!1}(\mathbb T)}= \|V_{r, s}\|_{L^{\!1}(\mathbb T)}= \int_{0}^1  |V_{r, s}(x)|dx.
$$
\end{thm}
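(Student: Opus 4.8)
The plan is to evaluate $\|V_{rN,sN}\|_{L^1(\mathbb{T})}$ \emph{exactly} and show it equals $\|V_{r,s}\|_{L^1(\mathbb{T})}$, by a rescaling-and-folding argument anchored on a classical summation identity for $\csc^2$. The first step is to factor the numerator of \eqref{E:genVdef2} using $\sin^2 A - \sin^2 B = \sin(A+B)\sin(A-B)$ with $A = sN\pi x$ and $B = rN\pi x$, which gives
\[
V_{rN,sN}(x) = \frac{\sin\bigl((s+r)N\pi x\bigr)\,\sin\bigl((s-r)N\pi x\bigr)}{(s-r)N\,\sin^2(\pi x)},
\]
so that $\|V_{rN,sN}\|_{L^1(\mathbb{T})} = \frac{1}{(s-r)N}\int_0^1 \frac{\bigl|\sin((s+r)N\pi x)\,\sin((s-r)N\pi x)\bigr|}{\sin^2(\pi x)}\,dx$.

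Next I would substitute $y = Nx$ and split the resulting integral over $[0,N]$ into the unit intervals $[j,j+1]$, writing $y = t+j$ with $0 \le t \le 1$ and $0 \le j \le N-1$. Since $(s\pm r)j \in \mathbb{Z}$, the quantities $|\sin((s\pm r)\pi(t+j))|$ are $1$-periodic in $j$ and hence equal $|\sin((s\pm r)\pi t)|$; thus the numerator is independent of $j$ and factors out of the sum, leaving
\[
\|V_{rN,sN}\|_{L^1(\mathbb{T})}
= \frac{1}{(s-r)N^2}\int_0^1 \bigl|\sin((s+r)\pi t)\,\sin((s-r)\pi t)\bigr|
\left(\sum_{j=0}^{N-1}\frac{1}{\sin^2\bigl(\pi(t+j)/N\bigr)}\right) dt.
\]

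The crux is the identity $\sum_{j=0}^{N-1}\csc^2\!\bigl(\pi(t+j)/N\bigr) = N^2\csc^2(\pi t)$, which I would prove by taking two logarithmic derivatives of the product formula $\sin(\pi t) = 2^{N-1}\prod_{j=0}^{N-1}\sin\bigl(\pi(t+j)/N\bigr)$: the first derivative yields a cotangent identity and the second yields the stated $\csc^2$ sum. Substituting it collapses the bracketed sum to $N^2/\sin^2(\pi t)$, the factors of $N^2$ cancel, and the remaining integrand is precisely $|V_{r,s}(t)|$, so $\|V_{rN,sN}\|_{L^1(\mathbb{T})} = \int_0^1 |V_{r,s}(t)|\,dt$, which is the assertion. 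Note that the argument produces an equality at every stage, so it gives the full strength of the theorem rather than a bound. The only genuine obstacle is establishing (or citing) the $\csc^2$ summation identity; the folding and the cancellation of the numerator's $j$-dependence are then routine bookkeeping, and coprimality of $r,s$ is not even needed for the norm identity itself.
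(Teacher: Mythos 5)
Your proof is correct, and it takes a genuinely different --- and much shorter --- route than the paper's. Both arguments begin from the factorization \eqref{E:Vdef4}, $V_{rN,sN}(x)=\sin(\pi(s+r)Nx)\,\sin(\pi(s-r)Nx)/\big((s-r)N\sin^2\pi x\big)$, but the paper then proceeds by explicit sign bookkeeping: it locates all $2sN-2$ zeros of $V_N$, attaches the periodic sign sequences $\varepsilon(a)$ and $\delta(b)$ to the simple zeros according to the sign of $V_N'$ there, sums the values of the antiderivative $W_N$ over the zeros, and evaluates the result via the discrete Fourier coefficients $\widehat{\varepsilon}(k)$, $\widehat{\delta}(k)$, arriving at the closed-form expression \eqref{E:L1final}, which is then observed to be independent of $N$. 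Your argument instead folds the integral: after the substitution $y=Nx$ and the observation that $|\sin((s\pm r)\pi(t+j))|$ does not depend on $j\in\ZZ$, everything reduces to the identity $\sum_{j=0}^{N-1}\csc^2\big(\pi(t+j)/N\big)=N^2\csc^2(\pi t)$ for $t\notin\ZZ$, which indeed follows by twice logarithmically differentiating $\sin\pi t=2^{N-1}\prod_{j=0}^{N-1}\sin\big(\pi(t+j)/N\big)$ (check: for $N=2$ it reads $\csc^2(\pi t/2)+\sec^2(\pi t/2)=4\csc^2(\pi t)$). Equivalently, you are proving the pointwise averaging identity $\frac1N\sum_{j=0}^{N-1}\big|V_{rN,sN}\big(\tfrac{t+j}{N}\big)\big|=|V_{r,s}(t)|$. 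The trade-off: your route avoids all the zero-counting and sign analysis, gives an equality at every step, and, as you correctly note, never uses $(r,s)=1$, so it is slightly more general; the paper's longer computation has the side benefit of producing the explicit formula \eqref{E:L1final} for the common value of the norm, which it reuses as an alternative verification in Corollary \ref{cor2}. In a final write-up you should only add the (easy) remark that all integrands involved are continuous trigonometric polynomials, so applying the $\csc^2$ identity under the integral sign over $(0,1)$ and interchanging the finite sum with the integral are harmless.
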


  This result is surprising because of the  oscillatory nature of these
  functions, which have increasing numbers of sign changes as $N$ increases,  visible in   \figurename~\ref{V_{1,2}}
and  \figurename~\ref{V_{2,4}}; nevertheless 
both functions  have the same $L^1$-norm on $[0,1]$ by Theorem \ref{thm1}.

  For individual values of $r$ and $s$, on taking $N=1$, the value can in principle
  be explicitly determined.  For the special case $(r, s) = (n, n+1)$  we observe that 
  the kernel $V_{n, n+1}(x)$ coincides with the Dirichlet kernel $D_n(x)$ and therefore 
  obtain the following  well known answer.
  
\begin{thm}\label{thm2}
For $n \ge 1$ we have
$$
V_{n, n+1}(x) = D_n(x) = \frac{ \sin \pi(2n+1)x}{\sin \pi x}.
$$
Here $D_n(x)$ is the Dirichlet kernel for period $1$ functions.  In particular, 
$$
\|  V_{n, n+1}\|_{L^{1}(\mathbb T)}=    \| D_n\|_{L^{1}(\mathbb T)}=  L_n,
$$
where $L_n$ is the $n$-th Lebesgue constant.
\end{thm}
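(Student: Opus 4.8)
The plan is to obtain the closed form directly from the expression \eqref{E:genVdef2} already derived for the general kernel, specialized to $r=n$, $s=n+1$, $N=1$, and then to read off the $L^{1}$ statement from the definition of the Lebesgue constant.

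First I would substitute $N=1$, $r=n$ and $s=n+1$ into \eqref{E:genVdef2}. Since $s-r=1$, the factor $(s-r)N$ in the denominator disappears and
\[
V_{n,n+1}(x) = \frac{(\sin \pi(n+1)x)^2 - (\sin \pi n x)^2}{(\sin \pi x)^2}.
\]
The main step is then the elementary identity $\sin^2 A - \sin^2 B = \sin(A+B)\sin(A-B)$, applied with $A=\pi(n+1)x$ and $B=\pi n x$. Here $A+B = \pi(2n+1)x$ and $A-B = \pi x$, so the numerator equals $\sin\pi(2n+1)x\cdot\sin\pi x$. Cancelling one factor of $\sin\pi x$ against the denominator yields
\[
V_{n,n+1}(x) = \frac{\sin\pi(2n+1)x}{\sin\pi x} = D_n(x),
\]
which is the asserted identity. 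The $L^{1}$ claim is then immediate: once $V_{n,n+1}=D_n$ as functions on $\mathbb{T}$, their $L^{1}$-norms coincide, and $\|D_n\|_{L^{1}(\mathbb{T})}$ is by definition the $n$-th Lebesgue constant $L_n$.

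I do not expect any real obstacle here; the computation is short. The only point needing a word of care is the cancellation of $\sin\pi x$ at the integers $x\in\mathbb{Z}$, where this factor vanishes: there the displayed equalities must be understood as equalities of continuous functions with \emph{removable} singularities. A clean way to sidestep this altogether, which I would include as a cross-check, is to use instead the Fourier-series form \eqref{E:genVdef3} together with the weights \eqref{E:vdef}. For $r=n$, $s=n+1$, $N=1$ these weights satisfy $v_{n,1}(k)=1$ for $|k|\le n$ and $v_{n,1}(k)=0$ otherwise, so that $V_{n,n+1}(x)=\sum_{|k|\le n} e(kx)$; summing this finite geometric series returns $\frac{\sin\pi(2n+1)x}{\sin\pi x}$ and exhibits $V_{n,n+1}$ and $D_n$ as equal trigonometric polynomials, well defined and equal at every $x$.
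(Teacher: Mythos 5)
Your proposal is correct and follows essentially the same route as the paper: the paper simply quotes its already-established product formula \eqref{E:Vdef4} (itself obtained from \eqref{E:genVdef2} via the identity $\sin^2 A-\sin^2 B=\sin(A+B)\sin(A-B)$) and specializes to $s-r=1$, which is exactly the computation you carry out. Your additional cross-check via the Fourier coefficients \eqref{E:genVdef3}--\eqref{E:vdef} is a harmless bonus that also settles the removable-singularity point.
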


The Lebesgue constants $L_n$  solve the extremal problem of giving the supremum of the
$n$-th partial sum $|S_n(f, x)|$ of the Fourier series of $f(x)$ where $f(x)$ is any periodic continuous
function of period $1$ having $|f(x)| \le 1$ everywhere (Lebesgue \cite{Leb1912}, see also Timan \cite[Sects. 4.5, 8.2]{Tim63},
Zygmund \cite[Chap. II, Sect. 12]{Zyg68}.) They are given by
\begin{equation}\label{E:Leb}
L_n = \frac{2}{\pi} \int_{0}^{2 \pi} | \frac{\sin((n+1)/2) \theta}{2 \sin (1/2)\theta}| d \theta.
\end{equation}
It is well known that
\[
L_1= \frac13 + \frac{2\sqrt3}{\pi} = 1.435991\ldots
\]
 and 
 \[
L_2 =  \frac{1}{5}+\frac{\sqrt{10-2\sqrt{5}}}{4\pi}\left(1+3\sqrt{5}\right)=1.642188\dots
 \]
  
Combining these two results we obtain for
the original kernel of de la Vall\'ee Poussin \cite[p. 801 top]{deVP18}
and \cite[Sect. 26]{deVP19}, the following answer.


\begin{cor}\label{cor2}
Let $V_{N,2N}(x) = 2\Delta_{2N}(x) - \Delta_N(x)$ be the de la Vall\'ee Poussin kernel.
 Then
\begin{equation}\label{E:L11}
\int_0^1|V_{N,2N}(x)|\,dx = L_1 = \frac13 + \frac{2\sqrt3}{\pi} = 1.43599112\ldots
\end{equation}
for all $N$.
\end{cor}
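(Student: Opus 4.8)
The plan is to derive the corollary by chaining together the two main theorems; there is essentially no new analytic work, only the bookkeeping of matching parameters. First I would observe that the de la Vall\'ee Poussin kernel $V_{N,2N}(x) = 2\Delta_{2N}(x) - \Delta_N(x)$ is exactly the $N$-th member of the family $\sG_{1,2} = \{V_{N,2N} : N \ge 1\}$ corresponding to the coprime pair $(r,s) = (1,2)$, as recorded in \eqref{E:Vdef1}--\eqref{E:Vdef3}. Applying Theorem \ref{thm1} to this family gives at once that $\|V_{N,2N}\|_{L^{\!1}(\mathbb T)}$ does not depend on $N$, so that $\|V_{N,2N}\|_{L^{\!1}(\mathbb T)} = \|V_{1,2}\|_{L^{\!1}(\mathbb T)}$ for every $N \ge 1$. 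This reduces the whole computation to the single base case $N=1$.

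Second, I would identify the base case with a Dirichlet kernel. The pair $(r,s) = (1,2)$ is the instance $(n,n+1)$ with $n=1$, so Theorem \ref{thm2} yields $V_{1,2}(x) = D_1(x) = \frac{\sin 3\pi x}{\sin \pi x}$, and therefore $\|V_{1,2}\|_{L^{\!1}(\mathbb T)} = \|D_1\|_{L^{\!1}(\mathbb T)} = L_1$, the first Lebesgue constant.

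Third, it remains only to evaluate $L_1$. This value is already quoted in the text, but I would include the short direct verification for completeness: using $\sin 3\theta = 3\sin\theta - 4\sin^3\theta$ one rewrites $D_1(x) = 1 + 2\cos 2\pi x$, which is nonnegative on $[0,\tfrac13]\cup[\tfrac23,1]$ and negative on $(\tfrac13,\tfrac23)$. Since $\int_0^1 D_1(x)\,dx = 1$, writing $A := \int_{1/3}^{2/3}(1+2\cos 2\pi x)\,dx$ gives $\int_0^1 |D_1(x)|\,dx = 1 - 2A$; an elementary antiderivative computation yields $A = \tfrac13 - \tfrac{\sqrt3}{\pi}$, hence $L_1 = \tfrac13 + \tfrac{2\sqrt3}{\pi}$, as claimed. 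Combining the three steps proves \eqref{E:L11}.

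There is no genuine obstacle here: the corollary is a formal consequence of Theorems \ref{thm1} and \ref{thm2}, and the only points requiring care are the two parameter matches $(r,s) = (1,2)$ in both families and the (standard) closed-form evaluation of $L_1$.
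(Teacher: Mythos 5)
Your proposal is correct and follows essentially the same route as the paper: combine Theorem \ref{thm1} (to reduce to $N=1$) with Theorem \ref{thm2} (to identify $V_{1,2}=D_1$ and the norm with $L_1$), then evaluate explicitly using $V_{1,2}(x)=1+2\cos 2\pi x$ with sign changes at $x=\tfrac13,\tfrac23$. Your bookkeeping via $1-2A$ with $A=\tfrac13-\tfrac{\sqrt3}{\pi}$ is just a cosmetic rearrangement of the paper's signed antiderivative evaluation over the three subintervals, and all the computations check out.
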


We remark that de la Vall\'{e}e Poussin 
introduced the delayed  to study pointwise approximation to Fourier series in his 1919 book
on approximation  \cite[Chap. II, Sec. 27, 29]{deVP19}.
These families of kernels 
form approximate identities, i.e. 
there is a constant $M$ such that for each $N \ge 1$, 
\[
\int_{0}^1 |K_{nN, pN} (x)| dx \le M,
\]
\[
\int_{0}^1 K_{nN, pN} (x) \,dx =1,
\] 
while for each $\delta >0$, 
\[
\lim_{N \to \infty} \int_{ \delta \le |u| \le 1} |K_{nN, pN}(x) | dx =0.
\]
In particular,  for continuous functions $f(x)$
on the torus  ${\mathbb T} = \mathbb{R}/ \mathbb{Z}$ one has pointwise convergence
$$
\lim_{N \to \infty} \sigma_{nN, pN}(f, x) = f(x).  
$$
The issue of how fast the approximations converge to a function in given classes has
been much studied, with work up to the 1950's described in  Timan \cite[Sect. 8.4.4]{Tim63}.
Further work includes Efimov \cite{Ef59}, \cite{Ef60}, Teljakovski \cite{Tel60},
 Dahmen \cite{Da78}, Stechkin \cite{Ste78}, and 
Serdyuk et al. \cite{SO08}, \cite{Ser11}, \cite{SOM12}. 

%
%
\section{Proof of  Theorem \ref{thm1}} \label{S:Proof}

\noindent
Let $r$ and $s$ be integers with $0\le r<s$ and $(r,s)=1$.  The parameters $r, s$ will remain fixed throughout
this section, we simplify the notation by omitting mention of $r$ and $s$ when naming functions.
In particular, $V_N$ means $V_{rN,sN}$.

     From \eqref{E:genVdef2} one obtains using  trigonometric addition formulas\footnote{The addition formulas yield
  $\big(\sin (a+b)x \big)\big(\sin(a-b)x \big)= 
   (\sin ax)^2(\cos bx)^2 - (\sin bx)^2 (\cos ax)^2.$
   The right side then equals  $(\sin ax)^2 - (\sin bx)^2,$
  after adding $(\sin ax)^2 (\sin bx)^2$ to both the opposing terms.}
  that
\begin{equation}\label{E:Vdef4}
V_N(x) = \frac{\big(\sin \pi(s+r)Nx\big)\big(\sin\pi(s-r)Nx\big)}{(s-r)N(\sin\pi x)^2}\,.
\end{equation}
Hence $V_N$ has zeros at points of the form $\{ a/((s+r)N): 0 \le a \le (s+r)N -1\}$ and also at points of the form
$\{b/((s-r)N):  0 \le b \le (s-r)N -1\}$, excluding points where $(s+r)N|a$ or $(s-r)N|b$.  Thus there are $(s+r)N-1$ zeros of the
first kind and $(s-r)N-1$ zeros of the second kind, making a total of $2sN-2$ zeros, counting
multiplicity.  

Since $V_N$ is a trigonometric polynomial of degree $sN-1$, we know that it could
have at most $2sN-2$ zeros.  By taking $a=(s+r)n$ and $b = (s-r)n$ 
We can  see that $V_N$ has
a double zero at $n/N$, for $0<n<N$, taking $a= (s+r)n$ and $b= (s-r)n$ above.  

If $r$ and $s$ are of opposite parity, then $\gcd(s+r,s-r)=1$
implies  there are no other double zeros.  If however $r$ and $s$ are both odd, then $\gcd(s+r,s-r) = 2$,
and by taking $a = n(s+r)/2$, $b = n(s-r)/2$ we find that $V_N$ has double zeros at $n/(2N)$
for $0<n<2N$. 

As an example, in Figure \ref{V_{2,4}} we have $r+s =3$,
and $V_{2,4}(x)$ has  $5$ zeros of the first kind and $1$ zero of the second kind;
the five zeros  of first kind are located at $x= \frac{j}{6}$, $1 \le j \le 5$, and the  zero of second kind at $x =\frac{1}{2}$,
so there is  a double zero at $x = \frac{1}{2}.$

     To compute $\int_0^1|V_N(x)|\,dx$, we break the interval $[0,1]$ into subintervals
running from one simple zero of $V_N$ to the next and then summing the area enclosed between each interval. 
 In order to know what sign to attach to
each interval (to obtain $|V_N(x)|$) we need to consider whether $V_N$ is increasing or decreasing at a simple zero.

\begin{proof}[Proof of Theorem \ref{thm1}]

We define the function $F_N(x)$ by the equality $V_N(x) = F_N(x)\sin (\pi(s+r)Nx)$,
so
$$
F_N(x) = \frac{\sin(\pi(s-r)Nx)}{(s-r)N(\sin\pi x)^2}
$$
using \eqref{E:Vdef4}.
 Then for integer $a$ with $\frac{a}{(s+r)N} \not\in\ZZ$, we have
\[
 V_N'\Big(\frac a{(s+r)N}\Big) = (-1)^a F_N\Big(\frac a{(s+r)N}\Big) \pi(s+r)N\,.
\]
Hence
\begin{align}\label{E:sgnV'a}
\sgn &V_N'\Big(\frac a{(s+r)N}\Big) = (-1)^a\sgn \Big(\sin\frac{\pi(s-r)a}{s+r} \Big)\\
&= (-1)^a\sgn \Big(\sin\Big(\pi a - \frac{2\pi ar}{s+r}\Big)\Big)
= -\sgn \Big( \sin\frac{2\pi ar}{s+r}\Big)\,. \notag
\end{align}
Accordingly, we set
\begin{equation}\label{E:epsilondef}
\varepsilon(a) := \sgn \Big( \sin\frac{2\pi ar}{s+r}\Big) \,.
\end{equation}
We note that
\[
\varepsilon(a+s+r) = \sgn \Big(\sin\frac{2\pi(a+s+r)r}{s+r}\Big) 
=\sgn \Big( \sin\frac{2\pi ar}{s+r}\Big)  = \varepsilon(a).
\]
Hence the values $\varepsilon(a)$ are periodic with period $s+r$.

 We define the function $G_N(x)$  so that $V_N(x) = G_N(x)\sin(\pi(s-r)Nx)$, so
 $$
 G_N(x) = \frac{\sin (\pi(s+r)Nx))}{(s-r)N(\sin\pi x)^2},
 $$
 using \eqref{E:Vdef4}.
   Then for integer $b$ with $\frac{b}{(s-r)N} \notin \ZZ$, one has
\[
 V_N'\Big(\frac b{(s-r)N}\Big) = (-1)^b G_N\Big(\frac b{(s-r)N}\Big) \pi(s-r)N  \,.
\]
Hence
\begin{align}\label{E:sgnV'b}
\sgn &V_N'\Big(\frac b{(s-r)N}\Big) = (-1)^b\sgn \Big(\sin\frac{\pi(s+r)b}{s-r}\Big) \\
&= (-1)^b\sgn \big( \sin\Big(\pi b + \frac{2\pi br}{s-r}\Big)\Big)
= \sgn\Big(\sin\frac{2\pi br}{s-r}\Big)\,. \notag
\end{align}
Accordingly, we set
\begin{equation}\label{E:deltadef}
\delta(b) := -\sgn \Big(\sin\frac{2\pi br}{s-r}\Big) \,.
\end{equation}
We note that
\[
\delta(b+s-r) = -\sgn \Big( \sin\frac{2\pi(b+s-r)r}{s-r}\Big)
=-\sgn \Big(\sin\frac{2\pi br}{s-r} \Big)= \delta(b)\,.
\]
Hence the values $\delta(b)$ are periodic with period $s-r$.

   Next set
\begin{equation}\label{E:WNdef}
W_N(x) := x + \sum_{n\ne0}\frac{v_{r,s-r}(n/N)}{2\pi i n}e(nx),
\end{equation}
and note  that $W_N^{'}(x) = V_N(x)$ by \eqref{E:genVdef3}.  Suppose that $x_{k-1}, x_k, x_{k+1}$ are three consecutive simple
zeros of $V_N$ in $(0,1)$, and suppose that $x_k = a/((s+r)N)$. If $V_N'(x_k) < 0$, then
$V_N(x)>0$ for $x_{k-1}<x<x_k$ and $V_N(x) <0$ for $x_k< x < x_{k+1}$.  These intervals contribute
to the integral an amount
\begin{align*}
\big(W_N(x_k) &- W_N(x_{k-1})\big) - (W_N(x_{k+1}) - W_N(x_k)\big) \\
&= -W_N(x_{k-1}) +2W_N(x_k) - W_N(x_{k+1})\,.
\end{align*}
In this situation $\varepsilon(a) = 1$, so the point $x_k$ contributes
$2\varepsilon(a)W_N(x_k)$. If $V_N'(x_k)>0$, then all signs are reversed, and the contribution
of $x_k$ is still $2\varepsilon(a)W_N(x_k)$.  Now suppose that $x_k$ is a double zero, and
that $V_N(x)>0$ in the two intervals.  Then these intervals contribute
\[
\big(W_N(x_k)-W_N(x_{k-1})\big) + \big(W_N(x_{k+1})-W_N(x_k)\big)
= W_N(x_{k+1}) - W_N(x_{k-1})\,.
\]
In this case, $x_k$ makes no contribution, but $\varepsilon(a)=0$, so the contribution is still
$2\varepsilon(a)W_N(x_k)$.  If $V_N(x)<0$ in these two intervals then all signs are reversed, but
the contribution of $x_k$ is still $2\varepsilon(a)W_N(x_k)$.  Similarly, if
$x_k = b/((s-r)N)$, then the contribution of $x_k$ is $2\delta(b)W_N(x_k)$.

   The interval $[0, 1/((s+r)N)]$
contributes $W_N(1/((s+r)N)) - W_N(0)$. The first term here is half of the contribution made
by the point $1/((s+r)N)$, since $\varepsilon(1) = 1$.  The contribution made by the interval
$[1-1/((s+r)N), 1]$ is $W_N(1)-W_N(1-1/((s+r)N))$.  The latter term is half the contribution made by
the point $1-1/((s+r)N)$, since $\varepsilon((s+r)N-1) = -1$. We note that $W_N(1)-W_N(0) = 1$.
Hence we conclude that
\begin{align}\label{E:L12}
\int_0^1|V_N(x)|\,dx = 1 &+ 2\sum_{a=1}^{(s+r)N}\varepsilon(a)W_N\Big(\frac a{(s+r)N}\Big) \\
&+ 2\sum_{b=1}^{(s-r)N}\delta(b)W_N\Big(\frac b{(s-r)N}\Big)\,.\notag
\end{align}
Here the terms $a=(r+s)N$ and $b = (s-r)N$ ought not to be included in the above, since $V_N(1)\ne 0$.
However, $\varepsilon((s+r)N)=0$ and $\delta((s-r)N) = 0$, so no harm is done.

    We write $W_N(x) = x + X_N(x)$ and evaluate the contributions of the two terms separately
    to the right side of \eqref{E:L12}.  The contribution of the linear term $x$ to the sum \eqref{E:L12}  is
\[
2\sum_{a=1}^{(s+r)N}\frac{\varepsilon(a)a}{(s+r)N} + 2\sum_{b=1}^{(s-r)N}\frac{\delta(b)b}{(s-r)N}\,.
\]
Since the $\varepsilon(a)$ are periodic with period $s+r$, and the $\delta(b)$ are periodic with
period $s-r$, the above is
\[
= 2\sum_{n=0}^{N-1}\bigg(\sum_{a=1}^{s+r}\varepsilon(a)\Big(\frac a{(s+r)N}+\frac nN\Big)
+\sum_{b=1}^{s-r}\delta(b)\Big(\frac b{(s-r)N} + \frac nN\Big)\bigg)\,.
\]
On reversing the order of the double sums, we see that this is
\begin{equation}\label{E:xcon1}
= 2\sum_{a=1}^{s+r}\varepsilon(a)\Big(\frac a{s+r} + N-1\Big)
+2\sum_{b=1}^{s-r}\delta(b)\Big(\frac b{s-r} + N - 1\Big)\,.
\end{equation}
Since $V_N(0) = V_N(1) = s+r > 0$, in the interval $[0,1]$ we pass from positive values to negative
 the same number of times that we pass from negative values to positive.  That is,
\[
0=\sum_{a=1}^{(s+r)N}\varepsilon(a) + \sum_{b=1}^{(s-r)N}\delta(b)
=N\sum_{a=1}^{s+r}\varepsilon(a) +N\sum_{b=1}^{s-r}\delta(b)\,.
\]
Hence the expression \eqref{E:xcon1} is
\begin{equation}\label{E:xcon2}
=\frac2{s+r}\sum_{a=1}^{s+r}\varepsilon(a)a + \frac2{s-r}\sum_{b=1}^{s-r}\delta(b)b\,.
\end{equation}

     The contribution of $X_N(x)$ to the right side of \eqref{E:L12}  is
\begin{equation}\label{E:XNcon1}
2\sum_{a=1}^{(s+r)N}\varepsilon(a)X_N\Big(\frac a{(s+r)N}\Big)
+ 2\sum_{b=1}^{(s-r)N}\delta(b)X_N\Big(\frac b{(s-r)N}\Big)\,.
\end{equation}
Here the sum over $a$ is
\begin{equation}\label{E:asumcon1}
\sum_{n\ne 0}\frac{v_{r,s-r}(n/N)}{\pi in}\sum_{a=1}^{(s+r)N}\varepsilon(a)e\Big(\frac{an}{(s+r)N}\Big)\,.
\end{equation}
where $v_{r,s-r}(n/N)$ are given in \eqref{E:vdef}. 
Since the $\varepsilon(a)$ have period $s+r$, we know by the theory of the Discrete Fourier Transform 
 that there exist numbers $\widehat{\varepsilon}(k)$ such that
\begin{equation}\label{E:epsilonDFT}
\varepsilon(a) = \sum_{k=1}^{s+r}\widehat{\varepsilon}(k)e\Big(\frac{ak}{s+r}\Big)
\end{equation}
holds for all integer $a$.  Hence the expression \eqref{E:asumcon1} is
\[
= \sum_{n\ne 0}\frac{v_{r,s-r}(n/N)}{\pi in}\sum_{k=1}^{s+r}\widehat{\varepsilon}(k)
\sum_{a=1}^{(s+r)N}e\left(\frac{a(n+kN)}{(s+r)N}\right).
\]
Here the innermost sum is $(s+r)N$ if $n\equiv-kN\pmod{(s+r)N}$, and is $0$ otherwise.  We write
$n = -kN +m(r+s)N$.  Then the above is
\begin{equation}\label{E:asumcon2}
=(s+r)\sum_{k=1}^{s+r}\widehat{\varepsilon}(k)\sum_{\substack{m \in \ZZ \\ (s+r)m\ne k}}
\frac{v_{r,s-r}(-k+m(s+r))}{\pi i(-k+m(s+r))}\,.
\end{equation}
We note that if $1\le k\le s+r$, then $v(-k+m(s+r))=0$ if $m > 1$ or if $m <0$.  Thus the sum
over $m$ can be restricted to just $m = 0, 1$.  However, the main point of the above is that
it is independent of $N$.

     The sum over $b$ in \eqref{E:XNcon1} is
\begin{equation}\label{E:bsumcon1}
\sum_{n\ne 0}\frac{v_{r,s-r}(n/N)}{\pi in}\sum_{b=1}^{(s-r)N}\delta(b)e\Big(\frac{bn}{(s-r)N}\Big)\,.
\end{equation}
The $\delta(b)$ have period $s-r$, so let numbers $\widehat{\delta}(n)$ be determined so that
\begin{equation}\label{E:deltaDFT}
\delta(b) = \sum_{k=1}^{s-r}\widehat{\delta}(k)e\Big(\frac{kb}{s-r}\Big)
\end{equation}
for all $b$.  
Hence the expression \eqref{E:bsumcon1} is
\[
= \sum_{n\ne 0}\frac{v_{r,s-r}(n/N)}{\pi in}\sum_{k=1}^{s-r}\widehat{\delta}(k)
\sum_{b=1}^{(s-r)N} e\Big(\frac{b(n+kN)}{(s-r)N}\Big)\,.
\]
The innermost sum is $(s-r)N$ if $n\equiv-kN\pmod{(s-r)N}$, and is $0$ otherwise.  We write
$n = -kN + m(s-r)N$.  Then the above is
\begin{equation}\label{E:bsumcon2}
(s-r)\sum_{k=1}^{s-r}\widehat{\delta}(k)\sum_{\substack{m \in \ZZ\\(s-r)m\ne k}}\frac{v_{r,s-r}(-k+m(s-r))}{\pi i(-k+m(s-r))}\,.
\end{equation}
This formula and \eqref{E:asumcon2} serve to evaluate the expression \eqref{E:XNcon1}.  On combining
this evaluation with \eqref{E:xcon2} in \eqref{E:L12}, we conclude that
\begin{align}\label{E:L1final}
\int_0^1|V_N(x)|\,dx &= 1 + \frac2{s+r}\sum_{a=1}^{s+r}\varepsilon(a)a
+\frac2{s-r}\sum_{b=1}^{s-r}\delta(b)b \notag \\
&\quad+(s+r)\sum_{k=1}^{s+r}\widehat{\varepsilon}(k)\sum_{\substack{m \\ (s+r)m\ne k}}
\frac{v_{r,s-r}(-k+m(s+r))}{\pi i(-k+m(s+r))}  \\
&\quad +(s-r)\sum_{k=1}^{s-r}\widehat{\delta}(k)\sum_{\substack{m\\(s-r)m\ne k}}
\frac{v_{r,s-r}(-k+m(s-r))}{\pi i(-k+m(s-r))}\, \notag.
\end{align}
Since this value is independent of $N$, the proof is complete.

\end{proof}


\section{Proofs of  Theorem \ref{thm2} and Corollary \ref{cor2}}\label{S:coro}

\begin{proof}[Proof of Theorem \ref{thm2}].
Recall from \eqref{E:Vdef4} that  for $(r, s)=1$, 
\begin{equation}
V_{r, s}(x) = \frac{\big(\sin \pi(s+r)x\big)\big(\sin\pi(s-r)x\big)}{(s-r)(\sin\pi x)^2}\,.
\end{equation}
In the special case $s-r=1$, which corresponds to $(r, s)= (n, n+1)$ we obtain the simplification
\[
V_{n, n+1}(x) = \frac{\big(\sin \pi(2n+1)x\big)\big(\sin\pi x\big)}{(\sin\pi x)^2} = \frac{\sin \pi(2n+1)x}{\sin \pi x}= D_{n}(x) 
\]
The right hand side is exactly the Dirichlet kernel $D_{n}(x)$, rescaled to the interval $[0,1]$.
By definition the Lebesgue constant 
$$
L_n = \| D_n(x)\|_{L^1({\mathbb T}) }= \int_{0}^1 |D_n(x)| dx,
$$
which on rescaling to the usual interval $[0, 2\pi]$ recovers the usual definition \eqref{E:Leb}.
\end{proof}

We give explicit computations yielding Corollary \ref{cor2}.

\begin{proof}[Proof of Corollary \ref{cor2}.]
The result follows on combining Theorems \ref{thm1} and \ref{thm2}.
For the explicit value we have 
\[
V_{1,2}(x) = 1 + 2\cos2\pi x = \frac{\sin3\pi x}{\sin\pi x}
\]
by \eqref{E:Vdef3} and \eqref{E:Vdef4}.  Hence
\begin{align*}
\int_0^1 |V_{1,2}(x)|\,dx &=\Big[x+\frac{\sin2\pi x}\pi\Big|_0^{1/3}
-\Big[x+\frac{\sin2\pi x}\pi\Big|_{1/3}^{2/3} +\Big[x+\frac{\sin2\pi x}\pi\Big|_{2/3}^1 \\
&=\frac13+\frac{2\sqrt3}\pi\,.
\end{align*}

     Alternatively, one can argue from \eqref{E:L1final}.  We find that $\varepsilon(1)=1$,
$\varepsilon(2)=-1$, $\varepsilon(3) = 0$, $\widehat{\varepsilon}(1)=-i/\sqrt3$,
$\widehat{\varepsilon}(2)=i/\sqrt3$, $\widehat{\varepsilon}(3)=0$, and
$\delta(1) = \widehat{\delta}(1)= 0$.  The result is the same.
\end{proof}

\section{Concluding Remarks }\label{further}
We showed  that the $L^1$-norm of 
\[V_{rN,sN}(x)=\frac{(\sin{\pi(s+r)Nx})\, (\sin{\pi(s-r)Nx})}{(s-r)N\sin^2{\pi x}}\]
is independent of $N$ . If we let $A_{r, s, N}^{+}$ resp. $A_{r, s, N}^{-}$ denote the
positive and negative areas of the graph then we have shown
$$
A_{r, s, N}^{+} - A^-_{r, s, N} = \| V_{r,s}\|_{L^1({\mathbb T})}
$$
is independent of $N$. Since $A_{r, s, N}^{+} + A_{r, s, N}^{-} = 1$
we have that both these areas are independent of $N$, with
$$
A_{r, s, N}^{+} = \frac{1}{2} \Big( 1+  \| V_{r,s}\|_{L^1({\mathbb T})}\Big).
$$
The effect of increasing $N$ is does not change the area, but  shifts  its location. 
As $N$ increases most area (both positive and negative)
is concentrated near integer values of $x$. One can  show that
$$
|V_{rN, sN}(x)| \le \frac{4}{\sqrt[3]{N}} ~~~\mbox{for}  ~~\frac{1}{\sqrt[3]{N}} \le x \le 1- \frac{1}{\sqrt[3]{N}}.
$$

\section{Acknowledgments}\label{sec5}
The author thanks H. L. Montgomery for suggesting the project of
improving the bounds for   $L^1$ norms
of de la Vallee Poussin kernels as part of an REU program at
the University of Michigan in summer 2012. He  thanks H. L. Montgomery and J. C. Lagarias for  references and for 
editorial assistance with exposition. 
The author  thanks P. Nevai for helpful comments and references. P. Nevai  observed that
  Theorem \ref{thm1} can be  deduced from results  which 
are contained in his unpublished 1969 manuscript (Nevai \cite[Lemma 3]{Nev69}).

\nocite{*}
\bibliographystyle{cdraifplain}

\end{document}